\theoremstyle{plain}
\newtheorem{Thm}{Theorem}
\newtheorem{Coro}[Thm]{Corollary}
\newtheorem{Lem}[Thm]{Lemma}
\newtheorem{Claim}[Thm]{Claim}
\newtheorem{Prop}[Thm]{Proposition}
\theoremstyle{definition}
\begin{document}

\title{The coarse geometry of the Kakimizu complex}

\author{Jesse Johnson}
\address{\hskip-\parindent
        Department of Mathematics \\
        Oklahoma State University \\
        Stillwater, OK 74078 \\
        USA}
\email{jjohnson@math.okstate.edu}
\author{Roberto Pelayo}
\address{\hskip-\parindent
	Department of Mathematics \\
	University of Hawaii at Hilo \\
	Hilo, HI 96720 \\
	USA}
\email{robertop@hawaii.edu}
\author{Robin Wilson}
\address{\hskip-\parindent
	Department of Mathematics and Statistics\\	
	California State Polytechnic University, Pomona \\
	Pomona, CA 91768 \\
	USA}
\email{robinwilson@csupomona.edu}

\subjclass{Primary 57M}
\keywords{Knot theory, Seifert surface, Kakimizu complex}

\thanks{The first author was supported by NSF Grant DMS-1006369}
\thanks{The second author was supported by NSF Grant DMS-1045147}

\begin{abstract}
We show that the Kakimizu complex of minimal genus Seifert surfaces for a knot in the 3-sphere is quasi-isometric to a Euclidean integer lattice $\mathbb Z^n$ for some $n \geq 0$.  
\end{abstract}

\maketitle

\section{Introduction}

In general, a knot $K \subset S^3$ may have multiple non-isotopic minimal genus Seifert surfaces.  To understand all these possibilities, Kakimizu \cite{K} defined a simplicial complex $\mathcal{MS}(K)$, later referred to as the \textit{Kakimizu complex}.  Each vertex $\sigma$ of  $\mathcal{MS}(K)$ is an isotopy class of minimal genus Seifert surfaces for $K$, and $n$-simplices are spanned by isotopy classes with pairwise disjoint Seifert surface representatives.  The metric on $\mathcal{MS}(K)$ is defined by the minimal lengths of edge paths between vertices. Kakimizu~\cite{K} defined a metric on the complex using the infinite cyclic cover of $K$ and showed that this metric is equal to the edge path metric.

Recently,  $\mathcal{MS}(K)$ has been shown to be connected~\cite{S-T}, simply connected, and contractible~\cite{P-S}. In fact, $\mathcal{MS}(K)$ for several classes of knots has been computed, including special arborescent knots~\cite{S} and certain composite knots~\cite{K}.  Furthermore, the Kakimizu complex has been computed for all prime knots up to $10$ crossings~\cite{K2}.

For hyperbolic knots, Pelayo \cite{P} and Sakuma-Schackleton \cite {S-S} give a bound on the diameter of the Kakimizu complex that is quadratic in the genus of the knot, and Wilson \cite{W} shows that $\mathcal{MS}(K)$ is finite.  For satellite knots, however, $\mathcal{MS}(K)$ may be infinite, and may even be locally infinite \cite{Ba}.  

The goal of this paper is to describe the coarse geometry of the Kakimizu complex. Recall that a \emph{quasi-isometry} is a map $f: X \rightarrow Y$ between metric spaces $X, Y$ such that $\frac{1}{L} d_Y(f(x),f(y)) - L \leq d_X(x,y) \leq Ld_Y(f(x),f(y)) + L$ for some constant $L$ and every point of $Y$ is within an $L$-neighborhood of the image $f(X)$. For torus and hyperbolic knots, $\mathcal{MS}(K)$ is finite and therefore quasi-isometric to a single point.  For satellite knots, the large-scale structure may be more exciting.

\begin{Thm}
\label{mainthm}
For any knot $K \subset S^3$, the Kakimizu complex $\mathcal{MS}(K)$ is quasi-isometric to $\mathbb Z^n$ for some $n \geq 0$.
\end{Thm}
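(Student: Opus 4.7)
The plan is to leverage the JSJ decomposition of the knot exterior $E(K) = S^3 \setminus \mathrm{int}\, N(K)$ into hyperbolic and Seifert fibered pieces along a collection $\mathcal{T}$ of incompressible tori. For torus and hyperbolic knots the statement holds with $n=0$ by the finiteness results of Pelayo and Wilson cited above, so I focus on satellite knots, where $\mathcal{T}$ is nonempty.

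First I would show that every minimal genus Seifert surface $F$ admits a normal form with respect to $\mathcal{T}$: after isotopy, $F \cap \mathcal{T}$ consists of essential simple closed curves on each JSJ torus, and each component of $F$ cut along $\mathcal{T}$ is essential in its ambient JSJ piece. Assigning to $F$ the tuple of pieces $F \cap M_i$ across the JSJ components $M_i$ should give well-defined coarse projections $\pi_i \colon \mathcal{MS}(K) \to \mathcal{MS}(M_i,\partial)$, where $\mathcal{MS}(M_i,\partial)$ is an analogous complex of essential surfaces in $M_i$ with fixed boundary slopes on $\partial M_i$.

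Next I would analyze the image in each factor. For a hyperbolic JSJ piece $M_i$, the diameter bounds of Pelayo and Sakuma--Schackleton (applied internally to $M_i$) imply that only finitely many isotopy classes occur, so these factors contribute only a uniformly bounded amount to the metric. For a Seifert fibered piece $M_i$, the intersection $F \cap M_i$ can be arranged to be a union of essential surfaces with controlled boundary behavior; the only modification not already accounted for by finite data is a Dehn twist along the Seifert fiber direction, which supplies at most a $\mathbb{Z}$ factor per piece. Assembling these produces a map $\Phi \colon \mathcal{MS}(K) \to \mathbb{Z}^n$, with $n$ counting the Seifert fibered pieces that actually support a nontrivial twist on $F$.

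Finally I would verify that $\Phi$ is a quasi-isometry. Coarse surjectivity should follow by exhibiting, for each generator of $\mathbb{Z}^n$, an explicit pair of disjoint minimal genus Seifert surfaces differing by a single fiber twist in the appropriate JSJ piece; concatenating these realizes any integer tuple as an edge path. The hard part will be the bi-Lipschitz lower bound: I must show that an edge of $\mathcal{MS}(K)$, namely a pair of disjoint minimal genus Seifert surfaces, changes the twist coordinates by at most a uniformly bounded amount. Disjointness in $S^3$ does not obviously restrict behavior inside each JSJ component, so the argument will likely require a simultaneous normalization result, in the spirit of the intersection-graph techniques used by Scharlemann--Thompson to prove connectedness, showing that two disjoint minimal genus Seifert surfaces can be put into a common normal form whose restrictions to each JSJ piece differ only by bounded twisting. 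Establishing this simultaneous normal form is, I expect, the principal obstacle.
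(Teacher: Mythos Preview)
Your outline has a genuine conceptual error in locating the source of the $\mathbb{Z}^n$. You attribute the integer parameters to Dehn twists along the Seifert fiber direction inside Seifert fibered JSJ pieces, and you treat the hyperbolic pieces as the finite part. In fact the paper shows (Theorem~\ref{fin surfaces}, using Budney's classification of Seifert fibered submanifolds of $S^3$) that \emph{every} core block, Seifert fibered or hyperbolic, carries only finitely many isotopy classes of essential subsurfaces with the forced boundary slopes. The unbounded variation lives entirely in the gluing data: the annuli of $S$ inside regular neighborhoods of the JSJ tori can wind arbitrarily many times around each torus. So your projections $\pi_i$ to the blocks take only finitely many values and cannot see the $\mathbb{Z}^n$ at all; the correct coordinates are twist numbers indexed by JSJ tori in the interior of the core, and the rank $n$ equals (number of such tori) $-$ (number of fibered core blocks), not the number of Seifert fibered pieces.

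Beyond this, the paper's strategy sidesteps the obstacle you identify as principal. Rather than build an explicit map to $\mathbb{Z}^n$ and struggle with the bi-Lipschitz lower bound, the paper lets the abelian group $G$ generated by the torus twists act on a locally finite subcomplex $\mathcal{MS}(C)$ (Seifert surfaces isotopable into the core), checks that this action is proper and has finitely many orbits (Lemmas~\ref{fundamental} and~\ref{proper}), and invokes the \v{S}varc--Milnor lemma (Theorem~\ref{de La Harpe}) to get the quasi-isometry for free. The remaining step, showing $\mathcal{MS}(C)\hookrightarrow\mathcal{MS}(K)$ is a quasi-isometry, uses Schultens' double-curve-sum geodesics to see that the inclusion is isometric, plus an annulus-surgery argument to bound the distance from any Seifert surface to the core by $3g-2$. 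Your simultaneous-normal-form program is not needed.
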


The dimension $n$ of of the abelian group can be calculated in a relatively straightforward fashion. Below, we define a subset of the complementary pieces in the JSJ decomposition for the complement of $K$ called the core. It follows from the proof that the value of $n$ is equal to the number of  JSJ tori in the interior of the core minus the number of fibered complementary components in the core.

The outline of the paper is as follows: In Section~\ref{jsjsect}, we examine how Seifert surfaces for $K$ interact with the incompressible tori in a JSJ decomposition for the knot complement. In Section~\ref{gractionsect}, we define a group action on $\mathcal{MS}(K)$ by an abelian group, generated by twisting around the tori in the JSJ decomposition, then in Section~\ref{mainproofsect}, we prove that this action induces a quasi-isometry from $\mathcal{MS}(K)$ to $\mathbb{Z}^n$.

Note that the proof here is for knots in the 3-sphere, rather than links. Przytycki-Schultens~\cite{P-S} discuss ways of generalizing the Kakimizu complex to manifolds with multiple boundary components, but our proof relies on certain properties that are unique to knots, in particular the fact (proved below as a Corollary of a result proved by Ryan Budney~\cite{Bu}) that the Seifert fibered components of the complement of a JSJ decomposition are all small.  However, there are no known counterexamples such a generalization of our theorem for links.

\section{The Kakimizu Complex and the JSJ decomposition}
\label{jsjsect}

In \cite{K}, Kakimizu computes the Kakimizu complex for the connected sum of two non-fibered knots $K_1$ and $K_2$ with unique incompressible Seifert surfaces.  In this case, $\mathcal{MS}(K_1 \# K_2)$ is isometric to $\mathbb Z \subset \mathbb R$.  These Seifert surfaces come from taking the canonical Seifert surface obtained by forming the connected sum of the minimal genus Seifert surfaces for each knot and spinning it around the incompressible swallow-follow torus in the complement of the composite knot.  When a knot has more than two factors, more incompressible tori would mean more ways to potentially create new Seifert surfaces by spinning.  

To understand this structure, let $M_K$ be the knot complement, and consider the JSJ decomposition of $M_K$: let $T_1,\dots, T_n$ be a minimal collection of pairwise disjoint, incompressible tori such that the complement of $\bigcup T_i$ consists of Seifert fibered pieces and atoroidal (hyperbolic) pieces.  Each $T_i$ bounds a solid torus in $S^3$ containing $K$ on one side; we will transversely orient each $T_i$ so that the knot is on the negative side.   If we consider a neighborhood $\mathcal N(T_i)$ of each torus and take the complement of the interior of these neighborhoods in $M_K$, then $M_K - \bigcup_{i=1}^n int(\mathcal N(T_i))$ is a collection of compact connected components that we will call \emph{blocks}.

Let $B$ be a block not containing the knot and let $M$ be the component of the complement $M_K \setminus B$ that contains $K$.  If the knot is not nullhomologuous in $M$, then we will say that $B$ is a \emph{core block}.  This implies that every minimal genus Seifert surface for $K$ must intersect the torus $\partial M$ and therefore the interior the core block $B$.  Therefore, every minimal genus Seifert surface for $K$ must intersect every core block.  If the block $B$ contains the knot, then we also define it to be a core block.  We refer to the union of all the core blocks as the \emph{core} of the JSJ decomposition of $M_K$.  The reader can check that the core is a connected subset of $M_K$.  

One consequence of the definition of the core is that minimal genus Seifert surfaces must intersect tori in the interior of the core in a very controlled manner.

\begin{Prop}  \label{fixedslope} Let T be a JSJ torus in the interior of the core.  There is a fixed slope $\alpha$ of $T$ such that every Seifert surface $S$ for the knot must intersect $T$ in one or more parallel loops with exactly this slope.
\end{Prop}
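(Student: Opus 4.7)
The strategy is to place a minimal genus Seifert surface $S$ in minimal position with respect to $T$, identify the intersection curves as parallel essential loops, and then pin down their common slope via a homological invariant that does not depend on the choice of $S$.

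First I would isotope $S$ so that it meets $T$ transversely with $|S\cap T|$ minimal within its isotopy class. Since $T$ is a JSJ torus (hence incompressible in $M_K$), $S$ is incompressible (as a minimal genus Seifert surface), and $M_K$ is irreducible, the standard innermost-disk argument forces every component of $S\cap T$ to be essential on both $T$ and $S$. Because disjoint essential simple closed curves on a torus are pairwise parallel, all components of $S\cap T$ share a common unoriented slope $\beta$. A second standard reduction---whenever two adjacent parallel components of $S\cap T$ have opposite induced transverse orientations, the annulus they co-bound on $T$ allows $S$ to be modified so as to cancel both curves---shows that after minimization all components of $S\cap T$ are coherently oriented, giving $[S\cap T]=m\beta$ in $H_1(T)$ with $m=|S\cap T|$.

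The key observation is that the class $[S\cap T]\in H_1(T)$ depends only on $[S]\in H_2(M_K,\partial M_K)$, via the composition $H_2(M_K,\partial M_K)\xrightarrow{\mathrm{PD}} H^1(M_K)\xrightarrow{\mathrm{res}} H^1(T)\xrightarrow{\mathrm{PD}_T} H_1(T)$. Because $H_2(M_K)=0$ and the boundary map $H_2(M_K,\partial M_K)\to H_1(\partial M_K)$ is injective with image generated by the longitude of $K$, every Seifert surface represents the same element of $H_2(M_K,\partial M_K)$. Consequently $\gamma:=[S\cap T]\in H_1(T)$ is independent of $S$, depending only on $T$ and $K$.

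To finish, one needs $\gamma\neq 0$, since then $\beta$ is forced to equal the unique primitive integer class collinear with $\gamma$, which is the desired fixed slope $\alpha$. Suppose for contradiction $\gamma=0$. Then $m\beta=0$; since $\beta$ is nonzero in $H_1(T)$ (it represents an essential curve), $m=0$. Thus $S\cap T=\emptyset$, so $S$ lies entirely on one side of $T$. Because $\partial S=K$ lies in the solid torus $V\subset S^3$ bounded by $T$ containing $K$, and $S$ is connected, $S\subset\mathrm{int}(V)$. But $T$ is in the interior of the core, so the block on the positive side of $T$ is a core block, which $S$ would then avoid---contradicting the defining property of a core block. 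Hence $\gamma\neq 0$, and the slope is the fixed class $\alpha$. The only real technical subtlety is carrying out the coherent-orientation cancellation cleanly; everything else rests on standard incompressibility arguments and Poincar\'e--Lefschetz duality.
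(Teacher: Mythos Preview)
Your argument is correct and proceeds by the same homological mechanism as the paper's proof: the class $[S\cap T]\in H_1(T)$ is independent of the choice of Seifert surface and is nonzero because $T$ lies in the interior of the core. The only difference is in packaging. You invoke Poincar\'e--Lefschetz duality to show that $[S\cap T]$ depends only on $[S]\in H_2(M_K,\partial M_K)$; the paper instead observes concretely that the side $M'$ of $T$ away from $K$ is itself a knot complement (for some knot $K'$), so $S\cap T=\partial(S\cap M')$ must lie in $\ker\bigl(H_1(T)\to H_1(M')\bigr)$, which is generated by the longitude of $K'$, while on the $K$-side $S\cap T$ is homologous to $K$ and hence the same for every Seifert surface. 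The paper's route thus names the slope explicitly as the longitude of the companion knot, which is mildly more informative; your duality argument is slightly more streamlined. One small remark: you restrict to minimal genus $S$ in order to use incompressibility for the innermost-disk cleanup, whereas the proposition is stated for arbitrary Seifert surfaces; however, your homological invariance step applies verbatim to any $S$ (trivial intersection curves contribute nothing to $[S\cap T]$), so this is harmless.
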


\begin{proof}  Since $T$ is a JSJ torus, it separates $S^3$ into two components $M$ and $M'$ where $M$ is a solid torus containing the knot and $M'$ is on the opposite side.  Since $T$ is in the interior of the core, $S$ must intersect $T$ non-trivially. Then $S \cap T$ is a collection of parallel loops with orientations induced from $S$ and $T$ that define non-trivial element of the first homology group of $T$.  On the other side, $M'$ is homeomorpgic to a knot complement for some knot $K'$ in $S^3$.  The surface $S' = S \cap M'$ implies that the loops $S \cap T$ determine a trivial element of the first homology of $M'$, so $\partial S' = S \cap T$ must be a collection of parallel copies of a longitude $\alpha \subset T$ for the knot $K'$, which generates the kernel of the inclusion map $H_1(T) \rightarrow H_1(M')$.  For any other Seifert surface $F$ for the knot $K$ intersecting $T$, we must have that $F \cap T$ is homologous to $S \cap T$, since both of these collections of intersection curves are homologous to $K$ in $M$. Therefore, $F$ and $S$ must intersect $T$ in the same slope $\alpha$.  
\end{proof} 

\begin{Lem} 
\label{bound}
Let $S$ be a minimal genus Seifert surface for $K$ and let $T = \bigcup_{i=1}^n T_i$ be the collection of all JSJ tori.  If $S$ is isotoped to intersect $T$ minimally, then $\left| S\cap T\right| \leq 6g-4$, where $g$ is the genus of $S$.
\end{Lem}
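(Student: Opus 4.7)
The plan is to cut $S$ along $S \cap T$ and use Euler characteristic accounting on the resulting pieces, together with reductions coming from the minimality of intersection.

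First, isotope $S$ so that $|S \cap T|$ is minimal. Standard innermost-disk arguments, using that $S$ and each $T_i$ are incompressible and that $M_K$ is irreducible, force every circle of $S \cap T$ to be essential on both $S$ and on its containing torus, and ensure no component of $S \setminus T$ is a disk: a disk piece bounded by a curve on some $T_i$ would compress $T_i$, while a disk piece bounded by $K = \partial S$ would imply $K$ is unknotted, contradicting $g \geq 1$. In particular, there are no sphere pieces either, since $S$ is connected with nonempty boundary.

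Now cut $S$ along $S \cap T$ to obtain pieces $P_1, \ldots, P_m$. Cutting preserves Euler characteristic but doubles each cut circle into two boundary circles, so $\sum_j \chi(P_j) = \chi(S) = 1 - 2g$ and $\sum_j b(P_j) = 2k + 1$, where $k = |S \cap T|$ and $b(P_j)$ denotes the number of boundary components of $P_j$. Since each $P_j$ has nonnegative genus, $b(P_j) \leq 2 - \chi(P_j)$. Non-annulus pieces have $\chi(P_j) \leq -1$, so there are at most $2g - 1$ of them, and summing $b(P_j) \leq 2 - \chi(P_j)$ gives $k \leq m + g - 1$; the bound will then reduce to controlling the number of annulus pieces.

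The crux is bounding the annulus pieces. I would first show that no annulus piece can have both boundary circles on the same torus $T_i$: such an annulus $A$, together with an annular region $A' \subset T_i$ between its two boundaries (which are parallel by Proposition~\ref{fixedslope}), forms a torus $T^*$ in $M_K$; by irreducibility of $M_K$ and JSJ minimality, $T^*$ is either compressible (yielding a disk that lets us isotope $S$ to reduce $|S \cap T|$) or parallel to $T_i$ (allowing $A$ to be pushed through $T_i$), each contradicting minimality. A similar nested-annulus argument, applied when three parallel-on-$S$ intersection circles have their outermost pair on a common torus, further restricts each isotopy class of essential simple closed curves on $S$ to contain at most two intersection circles. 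Since a surface of genus $g$ with one boundary component admits at most $3g - 2$ pairwise disjoint, pairwise non-isotopic essential simple closed curves (a pants decomposition of $\Sigma_{g,1}$), the bound $k \leq 2(3g-2) = 6g-4$ follows.

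The main obstacle is rigorously executing the nested-annulus step: establishing that any three parallel-on-$S$ intersection circles allow an intersection-reducing isotopy will require a careful block-by-block analysis using Proposition~\ref{fixedslope} to pin down the slopes on the tori hosting the middle circle, and appealing to JSJ minimality to forbid product regions between distinct tori in the decomposition.
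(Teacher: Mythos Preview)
Your overall skeleton matches the paper's: reduce to the count ``at most $3g-2$ isotopy classes of essential curves on $S$, at most two intersection circles per class,'' and multiply. The Euler-characteristic bookkeeping you set up ($\sum\chi(P_j)=1-2g$, $k\le m+g-1$) is a detour you never use; the paper goes straight to the isotopy-class count.

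The genuine gap is in the ``at most two per class'' step, and your own last paragraph correctly flags it. Your proposed mechanism---form $T^*=A\cup A'$ from an annulus piece with both boundaries on one $T_i$, then argue $T^*$ is compressible or parallel---does not go through as written. The case analysis is incomplete: $T^*$ could be an essential torus inside a Seifert-fibered block, neither compressible nor parallel to $T_i$, and even in the compressible case you have not explained how a compressing disk for $T^*$ produces an isotopy of $S$ reducing $|S\cap T|$. More importantly, this lemma only treats the special situation where two adjacent parallel circles lie on the \emph{same} $T_i$; three parallel-on-$S$ circles can perfectly well lie on three different tori, and your ``similar nested-annulus argument'' for that case is only a promise.

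The paper's mechanism is different and is the key idea you are missing. If two adjacent parallel-on-$S$ intersection circles cobound an annulus $A\subset S$ lying in a block $B$, then either $A$ is boundary-parallel in $B$ (and $|S\cap T|$ can be reduced) or $A$ is essential; in the latter case $B$ must be Seifert fibered and $A$ is isotopic to a union of regular fibers. Now if there are three adjacent parallel circles, the two resulting essential annuli sit in adjacent Seifert-fibered blocks and force the Seifert fiberings on the common boundary torus to have the same slope, so the two fiberings can be made to agree there---contradicting minimality of the JSJ collection. This is the ``block-by-block'' input you were reaching for; it is not a product-region or Proposition~\ref{fixedslope} argument but rather the acylindricity of hyperbolic pieces together with the verticality of essential annuli in Seifert pieces.
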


\begin{proof}
Let $B_1,...,B_k$ denote the blocks of the JSJ decomposition.  Each $B_i$ is a submanifold of $M_K$ that is either a hyperbolic link complement and hence is atoroidal, or Seifert fibered.  Notice that $S$ meets each $B_i$ in a collection of orientable, disjoint, essential surfaces that are properly embedded in $B_i$.  

Since $S$ is a once-punctured surface of genus $g$, there are at most $3g-2$ isotopy classes of essential loops in $S$.  Consider the components of $S \cap T$, each of which is a simple closed curve.  Since both $S$ and $T$ are incompressible and the complement of $K$ is irreducible, we can assume that all intersection curves are essential in both surfaces. Otherwise we could reduce the number of intersections, contradicting the minimality of $S \cap T$.  In particular, all curves of $S \cap T_i$ for each $i$ are essential in $S$.  Therefore, there are no more than $3g-2$ isotopy classes of curves in $S$ for each intersection $S \cap T_i$. 

In order to bound the number of components in $S\cap T$, it suffices to bound the number of parallel pairwise disjoint curves of $S\cap T_i$ for each $i$.  Disjoint curves in $S \cap T$ that are parallel in $S$ cobound an annulus $A$ in $S$, and this annulus must be incompressible and properly embedded in some block $B_i$.  If $A$ were boundary parallel then we could reduce $S \cap T$. Thus $A$ is an essential annulus so $B_i$ must be Seifert fibered with $A$ isotopic to a union of fibers.

Assume for contradiction there are three adjacent pairwise disjoint curves in $S\cap T_i$ that are parallel in $S$.  Then the three curves correspond to two adjacent essential annuli $A_1$ and $A_2$ contained in adjacent Seifert fibered blocks.  Without loss of generality, we can assume that $A_1$ is properly embedded in block $B_1$ and $A_2$ is properly embedded in block $B_2$.  Because each $A_i$ is a union of fibers, the two fiberings of the common boundary torus $T_i$ induced from the Seifert fiberings of $B_1$ and $B_2$ have the same slope.  Therefore these two fiberings can be isotoped to agree on $T_i$ (see~\cite{H}), contradicting the minimality of the JSJ decomposition since $T_i$ can be removed from the collection of JSJ tori.  Therefore, there can be at most two adjacent curves of $S\cap T$ that are parallel in $S$ in each isotopy class of curves.  Hence $|S \cap T| \leq 2(3g-2)=6g-4$.  \end{proof}

The following is a slight generalization of the main result in \cite{W}. The proof  can be found in \cite{W}, however the statement is for manifolds with one toroidal boundary component.  It is not difficult to modify the proof to also hold for manifolds with a finite number of toroidal boundary components, by a minor modification of the normal surface equations.  

\begin{Thm}[\cite{W}]
\label{Wilson}
Let $M_L$ be a link complement.  Let $\alpha_1, \ldots, \alpha_k$ be a set of preferred longitudes for the link $L$.  If $M_L$ contains an infinite collection of essential surfaces $S_i$ of the same Euler characteristic such that $\partial S_i$ is isotopic to a subcollection of the $\alpha_i$ and there exists an $N$ such that $|\partial S_i| \leq N < \infty$ for each $i$, then $M_L$ contains a closed incompressible torus.
\end{Thm}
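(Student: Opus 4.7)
The plan is to apply Haken's normal surface theory, following the strategy of~\cite{W} but adjusting the bookkeeping to allow multiple toroidal boundary components. Fix a triangulation $\mathcal{T}$ of $M_L$. Using standard innermost-disk and outermost-arc arguments together with the incompressibility and boundary-incompressibility of each $S_i$, isotope each $S_i$ to be in normal form with respect to $\mathcal{T}$. Each such normal surface is encoded by a nonnegative integer vector of normal coordinates satisfying the matching equations, and the solution cone is finitely generated over $\mathbb{Z}_{\geq 0}$ by a fixed finite set of \emph{fundamental} normal surfaces $F_1,\dots,F_m$. Every $S_i$ then admits a Haken decomposition
\[
S_i = a_{i1} F_1 + \cdots + a_{im} F_m, \qquad a_{ij} \in \mathbb{Z}_{\geq 0}.
\]

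The key observation is that both the Euler characteristic and the boundary homology class in $H_1(\partial M_L)$ are linear functions on normal coordinates. By hypothesis $\chi(S_i)$ is constant, and the bound $|\partial S_i| \le N$ together with the restricted boundary slopes forces $[\partial S_i]$ to take only finitely many values. If some $F_j$ with $\chi(F_j)\ne 0$ had $a_{ij}$ unbounded, then $\chi(S_i)$ would diverge; similarly, unbounded $a_{ij}$ on an $F_j$ with $[\partial F_j]\ne 0$ would force $[\partial S_i]$ to diverge. Since the $S_i$ are pairwise non-isotopic and therefore represent distinct normal coordinate vectors, at least one coefficient must be unbounded. Passing to a subsequence, I obtain a single fundamental surface $G$ with $\chi(G)=0$, $[\partial G]=0$, and $a_{iG}\to\infty$, while all other coefficients stabilize.

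It remains to upgrade $G$ to a closed incompressible torus. A normal surface with $\chi=0$ is an annulus, M\"obius band, torus, or Klein bottle; combined with $[\partial G]=0$ and the constraint that boundary curves on each toroidal component of $\partial M_L$ are parallel of a single slope with coherent induced orientation, the annulus and M\"obius band cases force $\partial G=\emptyset$. If $G$ is a Klein bottle, the boundary of a regular neighborhood furnishes an orientable closed surface of Euler characteristic zero, i.e.\ a torus. Incompressibility is obtained as in~\cite{W}: a compressible closed surface of zero Euler characteristic in the irreducible manifold $M_L$ would bound a ball or a solid torus, and then Haken-summing arbitrary multiples of $G$ with a fixed surface could only produce finitely many isotopy classes, contradicting the infinitude of the $S_i$. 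I expect this final step---upgrading the fundamental $G$ to a genuinely essential torus by combining irreducibility with a careful Haken-sum analysis---to be the main obstacle; everything earlier is a linear-algebraic reorganization of the matching equations that extends to multiple boundary tori essentially without change.
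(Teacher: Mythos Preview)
The paper does not give its own proof of this theorem; it simply cites~\cite{W} and remarks that extending from one toroidal boundary component to several requires only ``a minor modification of the normal surface equations.'' Your outline is precisely that normal-surface argument, so the approaches coincide.

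Two steps in your sketch need tightening, however. First, the assertion that an unbounded coefficient $a_{ij}$ with $\chi(F_j)\ne 0$ forces $\chi(S_i)$ to diverge is false as stated: positive and negative Euler characteristics among the $F_j$ could cancel. You must first use irreducibility of $M_L$ (and $\partial$-irreducibility of its toroidal boundary) to rule out sphere and disk summands, so that every fundamental summand has $\chi(F_j)\le 0$; only then does the linear bound go through. Second, the homology class $[\partial G]\in H_1(\partial M_L)$ is the wrong invariant for forcing $\partial G=\emptyset$: an annulus with both boundary circles on the same torus already has $[\partial G]=0$, and your coherent-orientation remark is delicate because normal sums on the boundary can merge oppositely oriented curves. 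The standard fix is to use the boundary \emph{weight} (the number of intersections of $\partial S$ with the $1$-skeleton of $\partial M_L$), which is genuinely additive under Haken sum; bounded $|\partial S_i|$ together with the fixed slopes $\alpha_j$ bounds the weight uniformly, so every $F_j$ with $\partial F_j\ne\emptyset$ has bounded coefficient and $G$ is honestly closed. With these two corrections in place, the remaining incompressibility step you flag is exactly the one handled in~\cite{W}.
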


The following Corollary follows immediately from Theorem~\ref{Wilson}.

\begin{Coro}
\label{WilsonCoro}
Let $M_L$ be a link complement and $N \in \mathbf{N}$.  Suppose that $M_L$ contains no closed essential tori.  Let $\alpha_1, \ldots, \alpha_k$ be a set of preferred longitudes for the link $L$.  Then $M_L$ contains at most finitely many essential surfaces $S_i$ of maximal Euler characteristic such that $\partial S$ is isotopic to a subcollection of the $\alpha_i$ and $|\partial S_i| \leq N$ for each $i$.
\end{Coro}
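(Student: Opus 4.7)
The plan is to deduce Corollary~\ref{WilsonCoro} as the direct contrapositive of Theorem~\ref{Wilson}. I would argue by contradiction: suppose that $M_L$ contains infinitely many pairwise non-isotopic essential surfaces $S_i$ of maximal Euler characteristic whose boundaries are each isotopic to a subcollection of the preferred longitudes $\alpha_1,\dots,\alpha_k$ and satisfy $|\partial S_i| \leq N$. Since all $S_i$ share a common Euler characteristic (the maximum) and there is a uniform bound on the number of their boundary components, the collection $\{S_i\}$ meets the hypotheses of Theorem~\ref{Wilson} verbatim.

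Applying Theorem~\ref{Wilson} directly yields a closed incompressible torus inside $M_L$. This contradicts the standing hypothesis that $M_L$ contains no closed essential tori, and forces the collection $\{S_i\}$ to be finite, as claimed. The entire argument is a single one-line invocation of the theorem combined with the absence-of-tori hypothesis, which is why the authors phrase the corollary as following immediately.

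The only point of friction I anticipate, and the one step I would verify carefully, is the distinction between ``incompressible'' and ``essential'': the conclusion of Theorem~\ref{Wilson} produces a closed \emph{incompressible} torus, while the hypothesis rules out closed \emph{essential} tori. In a link complement, a closed incompressible torus fails to be essential only when it is parallel to a toroidal boundary component. In this setup ``essential'' is clearly intended as a synonym for ``closed incompressible'' in an irreducible, boundary-irreducible link complement (otherwise the authors would not call the deduction immediate), but if one wished to be fully careful one would either inspect the proof of Theorem~\ref{Wilson} to confirm that the torus it produces is not boundary parallel, or else absorb the boundary-parallel case into the convention on ``essential''. With that terminological point resolved, no further work is required.
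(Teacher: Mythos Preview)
Your proposal is correct and matches the paper's approach exactly: the paper simply states that the corollary follows immediately from Theorem~\ref{Wilson}, and your contrapositive argument is precisely that immediate deduction. Your remark on the incompressible/essential distinction is a fair caveat but does not alter the argument.
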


A Seifert fibered block of a JSJ decomposition may be toroidal, but the Seifert-fibered blocks of a knot complement in $S^3$ are much more restricted. Proposition 3.2 of~\cite{Bu} gives the following classification of Seifert-fibered submanifolds of $S^3$.

\begin{Lem}[\cite{Bu}]
\label{SFsub}  Let $V \neq S^3$ be a Seifert-fibered sub-manifold of $S^3$, then $V$ is diffeomorphic to one of the following:

\begin{itemize}

\item[$\cdot$] A Seifert-fibered space over an $n$-times puncture sphere with two exceptional fibers, appearing as the complement of $n$ regular fibers in a Seifert fibering of $S^3$.

\item[$\cdot$] A Seifert-fibered space over an $n$-times punctured sphere with $1$ exceptional fiber, appearing as the complement of $n-1$ regular fibers in a Seifert fibering of an embedded solid torus in $S^3$.  

\item[$\cdot$] A Seifert-fibered space over an $n$-times punctured sphere with no exceptional fibers.

\end{itemize}
\end{Lem}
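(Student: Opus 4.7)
The plan is to prove the lemma by showing that the Seifert fibration on $V$ extends to a Seifert fibration on either all of $S^3$ or on an embedded solid torus $D \supset V$. The three cases of the lemma then follow from the classical classification of Seifert fiberings of $S^3$ (which admit at most two exceptional fibers over the base $S^2$) and of a solid torus (which admits at most one exceptional fiber, its core). Along the way, the embedding $V \hookrightarrow S^3$ forces the base orbifold of $V$ to be a planar surface, explaining why all three cases have an $n$-punctured sphere as base.

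First I would show that the base orbifold of $V$ has underlying surface of genus $0$. If not, choose a non-separating simple closed curve on the base, away from cone points; its preimage in $V$ is a non-separating torus embedded in $S^3$. But $H_2(S^3;\mathbb{Z})=0$, so every closed orientable surface in $S^3$ separates, a contradiction. Hence the base is a sphere with some punctures (one for each boundary torus of $V$) and some cone points (one for each exceptional fiber).

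Next, by the solid torus theorem in $S^3$, each boundary torus $T_i$ of $V$ bounds a solid torus in $S^3$ on at least one side. I would split into two cases: either (A) some $T_i$ bounds a solid torus $D \subset S^3$ whose interior contains $V$, or (B) every $T_i$ bounds a solid torus $D_i$ contained in $S^3 \setminus \mathrm{int}(V)$, with the $D_i$ pairwise disjoint. In case (B), extending the Seifert fibration of $V$ across each $D_i$ by choosing a (possibly exceptional) fibered core produces a Seifert fibering of $S^3$ itself; since such fiberings have at most two exceptional fibers, $V$ arises as the complement of $n$ regular fibers in such a fibering, giving the first or third case. In case (A), the analogous extension over the internal complementary solid tori of $V$ inside $D$ produces a Seifert fibering of $D$; such a fibering has at most one exceptional fiber (the core of $D$), so $V$ arises as the complement of $n-1$ regular fibers in a Seifert-fibered solid torus, giving the second or third case.

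The main obstacle is justifying the extension of the Seifert fibration across each complementary solid torus. Extension can fail only when the fiber slope on the boundary torus coincides with the meridian slope of the adjacent solid torus; in that situation a regular fiber of $V$ would bound a disk just outside $V$, forcing the regular fiber to be null-homotopic in $S^3$ and compressing $\partial V$ in a way incompatible with the non-degeneracy of its Seifert fibration, unless $V$ is already a trivially fibered solid torus on the list. Ruling out this bad case, and carefully tracking which of the three cases occurs according to the number of exceptional fibers and the orientation of the complementary pieces, is the technical heart of Budney's argument in \cite{Bu}.
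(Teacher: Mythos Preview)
The paper does not supply its own proof of this lemma: it is quoted verbatim as Proposition~3.2 of \cite{Bu} and used as a black box in the proof of Theorem~\ref{fin surfaces}. So there is no argument in the paper to compare your proposal against.

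That said, your outline is a reasonable sketch of how Budney's proof actually proceeds, and you acknowledge as much in your final paragraph. The genus-zero base argument via $H_2(S^3)=0$ is correct, and organizing the proof around Alexander's solid-torus theorem, splitting into the cases where $V$ sits inside a solid torus versus where every complementary piece is a solid torus, is the right structure. The one place where your sketch is a bit loose is the handling of the obstruction to extending the fibration: saying that a null-homotopic regular fiber ``compresses $\partial V$ in a way incompatible with the non-degeneracy of its Seifert fibration, unless $V$ is already a trivially fibered solid torus'' is not quite accurate as stated, since for instance $V$ could be a product $F\times S^1$ over a multiply punctured disk with several boundary tori simultaneously exhibiting the bad slope. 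One has to argue more carefully that such configurations still land in the third bullet of the statement. But since you explicitly defer this step to \cite{Bu}, and the paper itself does the same, there is no discrepancy to report.
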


We will use the above Lemma to show that there are only finitely many incompressible surfaces in a Seifert fibered block for a knot complement.

\begin{Thm}
\label{fin surfaces}
Let $K$ be a knot $K$ and let $B$ be a core block of the JSJ decomposition for $M_K$.  There exist finitely many essential surfaces $S_1, S_2, \ldots, S_m$ such that for any minimal genus Seifert surface $S$ for $K$, every component of $S \cap B$ is isotopic to one of the $S_i$.  
\end{Thm}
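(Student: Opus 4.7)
The plan is to bound the topological complexity of the surface $S \cap B$ for any minimal genus Seifert surface $S$, and then show that for each bounded complexity type there are only finitely many isotopy classes in $B$. First I would isotope $S$ to minimize $|S \cap T|$, where $T$ is the union of all JSJ tori; Lemma~\ref{bound} then gives that the number of boundary circles of $S \cap B$ is at most $6g - 4$. Next, since the components of $S \cap B$ are all essential, each has Euler characteristic at most zero, and since $\chi(S) = 1-2g = \sum_i \chi(S\cap B_i)$, I get that $\chi(S \cap B)$ lies in the finite range $\{1-2g, \ldots, 0\}$. Hence only finitely many genera and numbers of boundary circles appear.

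With topological types bounded, I would next fix the boundary slopes of the intersection curves. Proposition~\ref{fixedslope} gives that boundary circles of $S \cap B$ lying on a JSJ torus in the interior of the core are parallel copies of a fixed slope $\alpha_i$ depending only on the torus, not on $S$. For tori on the outer boundary of the core, a short argument using the fact that the knot is nullhomologous across such tori, together with the minimality of $|S\cap T|$, lets me assume that either $S$ does not cross them or that the boundary slopes there are again determined. Either way, the boundary curves of $S \cap B$ lie on a finite, fixed list of slopes $\alpha_1,\dots,\alpha_k$ on the tori bounding $B$.

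The proof then splits into two cases. If $B$ is hyperbolic, it is atoroidal and hence contains no closed essential tori, so I can apply Corollary~\ref{WilsonCoro}, once for each of the finitely many values of the Euler characteristic, to conclude that there are only finitely many isotopy classes of essential surfaces in $B$ with boundary slopes in $\{\alpha_1,\dots,\alpha_k\}$ and at most $6g-4$ boundary components. If $B$ is Seifert fibered, I would invoke Lemma~\ref{SFsub}, so that $B$ fibers over an $n$-times punctured sphere with at most two exceptional fibers. By Waldhausen's classification, every essential surface in such a $B$ is isotopic either to a saturated (vertical) surface, necessarily a union of annuli and tori determined by an essential arc system in the base, or to a horizontal surface transverse to the fibers, whose topology is determined by its branched covering data over the base. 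In either subcase, the bounds on the Euler characteristic and the number of boundary components limit the isotopy types to a finite set.

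The main obstacle I expect is the Seifert fibered case. While essential surfaces in Seifert fibered manifolds are classified in principle, one must carefully combine the restricted structure provided by Lemma~\ref{SFsub} with the fixed boundary slope and bounded Euler characteristic to actually produce a finite list, handling separately the vertical possibilities (arcs in a planar base) and the horizontal possibilities (branched covers with bounded Euler characteristic). The hyperbolic case, by contrast, is a fairly direct application of the normal surface machinery already packaged in Corollary~\ref{WilsonCoro}.
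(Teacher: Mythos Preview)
Your overall plan matches the paper's: control the boundary data of $S\cap B$ via Proposition~\ref{fixedslope} and Lemma~\ref{bound}, then argue finiteness block by block, with the hyperbolic case handled directly by Corollary~\ref{WilsonCoro}. The divergence is in the Seifert fibered case.

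The paper does \emph{not} go through Waldhausen's vertical/horizontal dichotomy. Instead it combines Lemma~\ref{SFsub} with the fact that $K$ is a \emph{knot}: each solid torus complementary to $B$ in $S^3$ carries a compressing disk that the knot must meet (else a boundary torus of $B$ compresses in $M_K$), and a connected knot lies in only one such solid torus. This forces $n=1$ in the first case of Lemma~\ref{SFsub} and $n\le 2$ in the second, so those Seifert fibered blocks are actually atoroidal and Corollary~\ref{WilsonCoro} applies to them just as in the hyperbolic case. The remaining product case is dispatched by uniqueness of the incompressible surface with the prescribed boundary.

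Your Waldhausen route is reasonable in spirit, but the clause ``the bounds on the Euler characteristic and the number of boundary components limit the isotopy types to a finite set'' is not justified as written for vertical surfaces. A vertical annulus has $\chi=0$ and exactly two boundary circles regardless of which essential arc in the base it lies over, and once the base is a sphere with $n\ge 4$ punctures there are infinitely many isotopy classes of such arcs. So bounds on $\chi$ and $|\partial|$ alone do not yield finiteness here; you need an extra input---either the knot-complement argument above that forces $n$ small (precisely what the paper supplies), or a separate argument that the slopes fixed by Proposition~\ref{fixedslope} never coincide with the fiber slope so that vertical surfaces cannot occur. Until one of these is in place, the Seifert fibered case of your proposal has a gap.
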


\begin{proof}  If $B$ is a hyperbolic block, then it is atoroidal and $\partial (S \cap B)$ is fixed and bounded by Proposition~\ref{fixedslope} and Lemma~\ref{bound}.  In this case, the conclusion follows directly from  Corollary~\ref{WilsonCoro}.  If $B$ is a Seifert-fibered block, then Lemma~\ref{SFsub} gives us three possibilities.  In the first case, $B$ is Seifert fibered over an $n$-times punctured sphere with two singular fibers, arising as the complement of $n$ solid tori.  Since $M_K$ is a knot complement, there can only be one such solid torus in the complement of $B$.  To see this, note that any solid torus contains a compressing disk for its boundary.  Since the boundary tori of $B$ are incompressible in the knot complement, the knot must intersect this compressing disk.  However, since there is only one knot, there exists only one solid torus in the complement of the block. Thus, $n = 1$ and $B$ is Seifert-fibered over a once-punctured sphere with two critical fibers.  Such Seifert-fibered spaces are known to contain no closed essential surfaces and thus are atoroidal.  Applying Corollary~\ref{WilsonCoro}, the conclusion again follows.  

In the second case, $B$ is Seifert fibered over an $n$-punctured sphere with one critical fiber, arising as the complement of $n-1$ solid tori.  Once again, as $B$ is a subset of a knot complement, there can be at most $1$ solid torus, so $n \leq 2$.  These Seifert-fibered spaces are also atoroidal and thus applying Corollary~\ref{WilsonCoro}, the conclusion follows.  In the last case, $B$ is Seifert-fibered over an $n$-times punctured sphere with no exceptional fibers and is thus a product.  Since $B$ is a core block, by Proposition~\ref{fixedslope}, the boundary of $S \cap B$ is specified.  In such product spaces, there is a unique incompressible surface with specified boundary.
\end{proof}

\section{Group actions on the Kakimizu complex}
\label{gractionsect}

Every automorphism of the knot complement induces an automorphism of the Kakimizu complex.   For a given $T_i$, let $U$ be a closed regular neighborhood homeomorphic to $I \times T_i$. We will define an automorphism of the knot complement that is the identity outside of $U$ and spins around the JSJ torus in a given direction.  Consider the universal cover of $T_i$, which is homeomorphic to the plane.  Choose a coordinate system on this plane.  For every integer vector $(m,n) \in \mathbb Z \times \mathbb Z$, there is a family of automorphisms $\phi_t$ of $T_i$ for $t \in I$ that lift to translations of the plane in the direction of $(m,n)$ and such that $\phi_0$ and $\phi_1$ are the identity on $T_i$.  We obtain an automorphism of the knot complement as follows:  $$\Phi(x) = \left\{ \begin{array} {ll} (t , \phi_t(z)) & \textrm{ if } \, x = (t,z) \in I \times T_i \\ x & \textrm{else} \end{array} \right. .$$
 
Notice that for a fixed $T_i$,  each choice of integer vector $(m,n) \in \mathbb Z \times \mathbb Z$ gives one of these automorphisms of the knot complement.  Furthermore, composition of these automorphisms corresponds to integer vector addition in $\mathbb Z \times \mathbb Z$, which forms an abelian group of rank $2$.  A rank one subgroup of $\mathbb{Z} \times \mathbb{Z}$ acts trivially on the Kakimizu complex.  To see this, let $\alpha$ be a basis element for one of the factors of $\mathbb{Z}$ representing the slope on T given by Proposition~\ref{fixedslope}.  Spinning parallel to $\alpha$ induces an action of one of the $\mathbb{Z}$ factors on the Kakimizu complex.  This spinning leaves the isotopy classes of minimal genus Seifert surfaces fixed so its corresponding action on the Kakimizu complex is trivial. For each $T_i$, let $\Phi_i$ be the automorphism orthogonal to this trivial one. \\

Consider $G'$, the group of all automorphisms of the knot complement generated by these homeomorphisms $\Phi_i$.  For $i \neq j$, the support of $\Phi_i$ is disjoint from that of $\Phi_j$, so such homeomorphisms commute and $G'$ is abelian.  Let $N$ be the (normal) subgroup of $G'$ that acts trivially on the Kakimizu complex.  Then $G = G'/N,$ is also a finitely generated abelian group.  

To prove Theorem~\ref{mainthm},  we will first prove that the result holds for a particular subcomplex of $\mathcal{MS}(K)$ that is locally finite  ($MS(K)$ is not locally finite in general \cite{Ba}.)   We define the \emph{core Kakimizu complex} $\mathcal MS(C)$ to be subcomplex spanned by Seifert surfaces that can be isotoped into $C$. Because each $\Phi_i$ takes each block onto itself, the action of $G$ restricts to an action on $\mathcal{MS}(C)$.  

First, we prove some properties about the core Kamimizu complex.

\begin{Claim}  $\mathcal{MS}(C)$ is non-empty.
\label{nonempty}

\end{Claim}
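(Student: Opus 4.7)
The plan is to exhibit a vertex of $\mathcal{MS}(C)$ by taking any minimum genus Seifert surface $S$ for $K$, isotoping it to minimize intersections with the JSJ tori, and arguing that the resulting $S$ lies entirely inside $C$. Equivalently, I aim to show such an $S$ is disjoint from every non-core block.

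The main step is the following contradiction argument. Suppose $S$ meets some non-core block $B$, and let $M$ denote the component of $M_K \setminus B$ containing $K$. By the non-core hypothesis $[\lambda]=0$ in $H_1(M)$, so there is a Seifert surface for $K$ inside $M$; pick such a surface $F$ realizing the Thurston norm in $M$. I will verify $g(F)=g(S)$, in which case $F$ itself is a minimum genus Seifert surface for $K$ avoiding $B$. To that end, I decompose $S = (S \cap M) \cup_{\gamma} (S \cap B)$ with $\gamma = S \cap \partial B$; the cobordism $S \cap M$ together with $[\lambda]=0$ in $H_1(M)$ forces $[\gamma]=0$ in $H_1(M)$, and the fixed-slope reasoning of Proposition~\ref{fixedslope} pins $\gamma$ to parallel copies of longitudes of the appropriate companion knot on each component of $\partial B$. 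A cut-and-paste / Thurston-norm exchange then produces a surface $H \subset M$ with $\partial H = \gamma$ and $\chi(H) \geq \chi(S \cap B)$; replacing $S \cap B$ by $H$ yields a Seifert surface in $M$ of genus at most $g(S)$, giving $g(F) \leq g(S)$, while the reverse inequality is automatic since $F \subset M \subset M_K$.

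With this in hand, I iterate across the branches of non-core blocks attached to $C$ in the JSJ tree. Because the branches meet $C$ along disjoint tori, one can, at each step, choose the exchange surface $H$ to be disjoint from the branches already cleared, and since the JSJ decomposition is finite, the iteration terminates with a minimum genus Seifert surface contained in $C$.

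I expect the hardest step to be producing $H \subset M$ with $\chi(H) \geq \chi(S \cap B)$ and controlling where $H$ lives. The Euler-characteristic bound ultimately rests on either a Thurston-norm gluing principle across the incompressible torus $\partial B$, or a direct exchange using the structure of $B$: in the hyperbolic case via Corollary~\ref{WilsonCoro} and finiteness of essential surfaces with prescribed boundary slopes, and in the Seifert-fibered case via the classification of Lemma~\ref{SFsub}, which restricts $S \cap B$ to annuli or vertical surfaces that can be swapped across $\partial B$ explicitly.
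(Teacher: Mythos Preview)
Your setup is right: start with a minimum genus Seifert surface $S$ and argue it can be taken inside $C$. But you turn the key step into something much harder than it is, and the tools you reach for (Proposition~\ref{fixedslope}, Corollary~\ref{WilsonCoro}, Lemma~\ref{SFsub}) do not deliver what you need.

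The gap is the production of $H \subset M$ with $\partial H = \gamma$ and $\chi(H) \geq \chi(S\cap B)$. Proposition~\ref{fixedslope} is stated only for tori in the \emph{interior} of the core, so it does not apply to a torus $T$ separating a core block from a non-core block. Corollary~\ref{WilsonCoro} gives finiteness of essential surfaces, not an Euler-characteristic bound. Lemma~\ref{SFsub} classifies Seifert-fibered pieces but yields no $\chi$ comparison. A genuine Thurston-norm additivity theorem for gluing along incompressible tori would close the gap, but that is a substantial external result you neither state precisely nor cite.

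What you are missing is that $[\gamma]=0$ in $H_1(T)$, not merely that $\gamma$ has a determined slope. From $\partial(S\cap M') = \gamma$ (where $M'$ is the side of $T$ not containing $K$) one gets $[\gamma]=0$ in $H_1(M')$, and from $\partial(S\cap M) = K \cup \gamma$ together with the non-core hypothesis $[K]=0$ in $H_1(M)$ one gets $[\gamma]=0$ in $H_1(M)$; Mayer--Vietoris for $M_K = M\cup_T M'$ (using $H_2(M_K)=0$) then forces $[\gamma]=0$ in $H_1(T)$. Hence the curves of $\gamma$ occur in oppositely oriented pairs, adjacent pairs cobound annuli $A \subset T$ with interiors disjoint from $S$, and you may take $H$ to be these annuli pushed slightly into $M$, with $\chi(H)=0 \geq \chi(S\cap M')$. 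This is exactly the paper's proof: take $S$ minimizing $|S\cap(\bigcup T_i)|$ over \emph{all} Seifert surfaces, perform this annulus surgery at $T$ to reduce the intersection count, and obtain a contradiction in one stroke. No Thurston norms, no block-by-block case analysis, and no iteration across branches are needed.
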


\begin{proof}
Let $S$ be a Seifert surface for the knot $K$ such that $S \cap (\bigcup T_i)$ is minimal over all Seifert surfaces for $K$.  If $S$ stays inside the core, then $MS(C)$ is non-empty.  If $S$ exits the core, then it must do so by intersecting some JSJ torus $T$ that separates a core block from a non-core block.   Assume that $S$ intersects $T$ minimally.   Since $T$ is not in the interior of the core, the knot is homology trivial in the component of the complement of T that contains the knot.  Then, the intersection curves of $S$ with $T$ are nulhomologous, thus these curves of intersection occur in pairs with opposite orientations.  

Choose a pair of adjacent curves of intersection $\beta$ and $\gamma$ with opposite orientations.  Then $\beta$ and $\gamma$ co-bound an annulus $A \subset T$ with interior disjoint from $S$. Cut the Seifert surface $S$ along the curves $\beta$ and $\gamma$ and attach the resulting boundary components to $\partial A$, then push the resulting surface slightly into the interior of the block, reducing the number of intersections of $S$ with $T$ and thus contradiction the assumption that $S \cap (\bigcup T_i)$ is minimal.  Therefore the vertex representing $S$ is in $\mathcal{MS}(C)$, so $\mathcal{MS}(C)\neq \emptyset$.   
\end{proof}

\begin{Claim}
\label{connected}
$\mathcal{MS}(C)$ is connected.
\end{Claim}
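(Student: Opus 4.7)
My plan is to leverage the connectivity of the full Kakimizu complex $\mathcal{MS}(K)$, established in~\cite{S-T}, and iteratively modify a connecting path so that every vertex ends up in the core. Given $\sigma, \sigma' \in \mathcal{MS}(C)$, I first invoke~\cite{S-T} to obtain a path $\sigma = \sigma_0, \sigma_1, \ldots, \sigma_k = \sigma'$ in $\mathcal{MS}(K)$, and then choose a sequence of disjoint consecutive representatives $S_0, \ldots, S_k$ with $S_i \cap S_{i+1} = \emptyset$ for each $i$ and with $S_0, S_k \subset C$ (achievable by extending an ambient isotopy carrying a core representative of $\sigma_0$ to the chosen $S_0$, and likewise for $S_k$).

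The heart of the approach is descent on the non-negative integer $\sum_i |S_i \cap (\bigcup_j T_j)|$. If some intermediate $S_j$ exits the core, it crosses a JSJ torus $T^*$ lying on the boundary of $C$. As in Claim~\ref{nonempty}, the curves of $S_j \cap T^*$ are null-homologous on the non-core side of $T^*$ and so occur in oppositely-oriented parallel pairs. Choosing such a pair $\beta, \gamma$ that cobound an annulus $A \subset T^*$ whose interior is disjoint from $S_j$, the cut-and-paste of Claim~\ref{nonempty} modifies $S_j$ in a neighborhood of $A$ pushed into the non-core block on the far side of $T^*$ and produces a surface $S_j'$ with $|S_j' \cap T^*| < |S_j \cap T^*|$. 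Replacing $\sigma_j$ by $\sigma_j' = [S_j']$ then yields a walk from $\sigma$ to $\sigma'$ of strictly smaller total intersection, provided $S_j'$ remains disjoint from $S_{j-1}$ and $S_{j+1}$.

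The main obstacle is ensuring exactly this disjointness. Since the modification is supported in a neighborhood of $A$ pushed into the non-core block, I would refine the choice of $\beta, \gamma$ among the parallel pairs of $S_j \cap T^*$ so that the cobounding annulus $A$ misses $S_{j \pm 1} \cap T^*$, and choose the normal push direction into the non-core block to avoid any local incursions of $S_{j \pm 1}$ there. The intersection bound of Lemma~\ref{bound} constrains $|S_{j \pm 1} \cap T^*|$ and gives some room for this combinatorial choice; when the choice is tight (e.g., $|S_j \cap T^*| = 2$), an auxiliary isotopy of $S_{j \pm 1}$ within its isotopy class, keeping it disjoint from its other neighbor $S_{j \pm 2}$, can be used to clear the needed region. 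Once this positioning is made precise, the descent terminates after finitely many iterations with a walk whose every representative lies in $C$, yielding the desired path in $\mathcal{MS}(C)$.
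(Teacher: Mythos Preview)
Your descent strategy has a real gap at the disjointness step, and the patch you propose does not close it. After the annulus surgery on $S_j$ along $A \subset T^*$, the new surface $S_j'$ agrees with $S_j$ outside a neighborhood of $A$ pushed into the non-core block, so the only danger is that $S_{j-1}$ or $S_{j+1}$ meets that region. You suggest choosing $\beta,\gamma$ so that $A$ misses $S_{j\pm1}\cap T^*$, but the curves of $S_{j-1}$, $S_j$, $S_{j+1}$ on $T^*$ may be interleaved in an arbitrary cyclic order, and there is no reason an innermost pair for $S_j$ should be free of its neighbors' curves. Your fallback---an auxiliary isotopy of $S_{j\pm1}$ that clears the region while remaining disjoint from $S_{j\pm2}$---is asserted, not proved; worse, such an isotopy can increase $|S_{j\pm1}\cap(\bigcup T_i)|$, so your complexity $\sum_i |S_i\cap(\bigcup_j T_j)|$ need not decrease and the induction stalls. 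The appeal to Lemma~\ref{bound} does not help: that bound applies to a single surface in minimal position, not to three surfaces positioned to be pairwise disjoint.

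The paper bypasses this difficulty entirely by looking inside the Scharlemann--Thompson construction rather than treating it as a black box. Their path from $S$ to $S'$ is built from successive \emph{double curve sums}, and a double curve sum of two surfaces contained in $C$ is again contained in $C$ (the operation is supported on the union of the two surfaces). Hence if the endpoints lie in $C$, every intermediate surface already lies in $C$ and no modification is needed. You should replace the descent argument with this one-line observation.
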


\begin{proof}
To show that the subcomplex $\mathcal{MS}(C)$ is connected, we examine the construction used by Scharlemann-Thompson \cite{S-T} to find a path in the Kakimizu complex between any vertices $v_1$ and $v_2$.  Let $S$ and $S'$ be Seifert surfaces representing the vertices $v_1$ and $v_2$, respectively.   By taking double curve sums, Scharlemann-Thompson create a sequence of minimal genus Seifert surfaces $S_i$ for $0 \leq i \leq k$ such that $S_i \cap S_{i+1} = \emptyset$ for $0 \leq i < k$, with $S_0$ isotopic to $S$ and $S_k$ isotopic to $S'$.  If $S$ and $S'$ are both in $C$ then so are all the double curve sums. Thus, $S_i \subset C$ for each $i$ and the path is contained in $\mathcal{MS}(C) \subset \mathcal{MS}(K).$ 
\end{proof}

\begin{Claim}
\label{locallyfinite}
$\mathcal{MS}(C)$ is locally finite. 
\end{Claim}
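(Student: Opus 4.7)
The plan is to fix a vertex $v_0 = [S_0]$ with representative $S_0 \subset C$ and show that only finitely many isotopy classes of minimal genus Seifert surfaces $S \subset C$ disjoint from $S_0$ can exist. For any such $S$, I would isotope it so that the intersection with the union $T = \bigcup T_i$ of JSJ tori is minimal. Lemma~\ref{bound} gives $|S \cap T| \leq 6g - 4$, and Proposition~\ref{fixedslope} forces each curve of $S \cap T_i$ to be parallel to the fixed slope $\alpha_i$ that also controls $S_0 \cap T_i$. Disjointness from $S_0$ then places the curves of $S \cap T_i$ inside the annular components of $T_i \setminus (S_0 \cap T_i)$; since there are at most $6g-4$ curves distributed among at most $6g-4$ annuli, the configuration of $S \cap T_i$ inside $T_i$ is drawn from a finite set, and likewise over the finitely many tori in the core.

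The second step is to use Theorem~\ref{fin surfaces} block by block. In each core block $B$, every component of $S \cap B$ is isotopic in $B$ to one of a fixed finite list of essential surfaces, and the number of components of $S \cap B$ is bounded by the number of boundary curves, hence by $6g - 3$. Consequently the multiset of isotopy types of pieces of $S$ in each block is drawn from a finite set. Combined with the first step, both the intersection pattern of $S$ with the JSJ tori and the isotopy types of its pieces in each block are finite combinatorial choices.

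The main obstacle is to show that this finite data pins down $S$ up to only finitely many isotopy classes in $M_K$. The residual freedom is generated by Dehn twists of a piece $S \cap B$ along a collar of a torus $T_i$ in the slope $\alpha_i$; such a twist preserves the data of the first two steps but can change the rel-boundary isotopy class of the piece, and hence the global isotopy class of $S$. To bound the number of allowable twists, I would again invoke disjointness from $S_0$: a Dehn twist of $S \cap B$ along $\alpha_i$ displaces the piece relative to the pieces of $S_0 \cap B$ inside $B$, and after sufficiently many twists this displacement forces an unavoidable intersection with $S_0 \cap B$ inside $B$. Since only boundedly many twists can preserve disjointness and the pieces already come from a finite list up to isotopy in $B$, combining all these finite choices shows that $v_0$ has at most finitely many neighbors, proving local finiteness of $\mathcal{MS}(C)$.
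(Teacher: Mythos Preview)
Your approach is essentially the same as the paper's: use Theorem~\ref{fin surfaces} to get finitely many possible pieces in each block, then use disjointness from the fixed surface $S_0$ to bound the residual spinning freedom at the JSJ tori. The paper phrases the last step more directly: since $S_0$ intersects every torus $T_i$ in the interior of the core, the connecting annuli of $S$ inside $\mathcal{N}(T_i)$ cannot spin all the way around $T_i$ without crossing $S_0$, so there are only finitely many gluings. Your version locates the obstruction ``inside $B$'' and speaks of changing the rel-boundary isotopy class of $S\cap B$, which is slightly off---the block piece itself is fixed up to isotopy in $B$, and the obstruction lives in the torus collar, not in the interior of $B$---but the underlying idea is the same and correct.
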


\begin{proof}
Let $S$ be a minimal genus Seifert surface representing a vertex $v \in \mathcal{MS}(C)$. For any minimal genus Seifert surface $S' \subset C$ disjoint from $S$, there are finitely many possibilities for the intersection of $S'$ with each core block of the JSJ decomposition by Theorem~\ref{fin surfaces}. The surface $S'$ is determined by these intersections and the annuli that connect these subsurfaces inside the regular neighborhoods of the tori of $T$ in the interior of the core. Because $S$ intersects every component of $T$ in the interior of the core and $S'$ is disjoint from $S$, there are finitely many ways the subsurfaces can be connected together (In particular, no annulus can spin all the way around such a torus without crashing through $S$.) so there are finitely many minimal genus Seifert surfaces (up to isotopy) disjoint from $S$.\end{proof}

In order to prove that our Main Theorem holds for the subcomplex $\mathcal{MS}(C)$, we will need Theorem 25 of \cite{deLa}, which is stated below.

\begin{Thm}[\cite{deLa}]
\label{de La Harpe}
Let $X$ be a metric space that is geodesic and proper, let $G$ be a group and let $G \times X \rightarrow X$ be an action by isometries (say from the left).  Assume that the action is proper and that the quotient $G \backslash X$ is compact.  

Then the group $G$ is finitely generated and quasi-isometric to $X$.  More precisely, for any $x_0 \in X$, the mapping $G \rightarrow X$ given by $g \mapsto g x_0$ is a quasi-isometry.  
\end{Thm}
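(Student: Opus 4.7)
This is the classical \v{S}varc--Milnor lemma, so the plan follows the standard strategy from geometric group theory. The idea is to use compactness of the quotient to locate an orbit that is coarsely dense in $X$, then exploit properness of the action to extract a finite candidate generating set from those group elements that translate a fixed basepoint by a bounded amount. The orbit map $g \mapsto g x_0$ will then convert word-length bounds in $G$ into distance bounds in $X$ and vice versa.

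First I would fix a basepoint $x_0 \in X$. Because $G \backslash X$ is compact and the projection $\pi \colon X \to G \backslash X$ is $1$-Lipschitz, there exists $R > 0$ such that the orbit $G \cdot x_0$ is $R$-dense in $X$: every $y \in X$ lies within distance $R$ of some $g x_0$. In particular the orbit map is coarsely surjective with constant $R$. Then I would define
$$S := \{\, g \in G : d(x_0, g x_0) \leq 2R + 1 \,\}.$$
Since $X$ is proper, the closed ball $\overline{B}(x_0, 2R+1)$ is compact, and since the $G$-action is proper, only finitely many $g \in G$ can translate $x_0$ into this ball. Hence $S$ is finite.

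Next I would show that $S$ generates $G$ and simultaneously establish the upper bound on word length. Given $g \in G$, use that $X$ is geodesic to choose a geodesic $\gamma$ from $x_0$ to $g x_0$ of length $\ell = d(x_0, g x_0)$. Subdivide $\gamma$ into points $p_0 = x_0, p_1, \ldots, p_n = g x_0$ with consecutive spacing at most $1$, so $n \leq \lceil \ell \rceil$. For each $i$ pick $g_i \in G$ with $d(p_i, g_i x_0) \leq R$, taking $g_0 = e$ and $g_n = g$. Then by the triangle inequality and $G$-invariance of the metric,
$$d(x_0, g_i^{-1} g_{i+1} x_0) = d(g_i x_0, g_{i+1} x_0) \leq 2R + 1,$$
so each $g_i^{-1} g_{i+1}$ lies in $S$. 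Thus $g = \prod_{i=0}^{n-1}(g_i^{-1} g_{i+1})$ expresses $g$ as a product of at most $\lceil \ell \rceil$ elements of $S$, yielding $d_S(e, g) \leq d(x_0, g x_0) + 1$.

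For the reverse inequality, set $C := \max_{s \in S} d(x_0, s x_0) \leq 2R + 1$. If $g = s_1 \cdots s_k$ with $s_i \in S$, the triangle inequality together with $G$-invariance of the metric gives $d(x_0, g x_0) \leq k C$, hence $d(x_0, g x_0) \leq C \cdot d_S(e, g)$. Combined with the $R$-density established above, this shows that $g \mapsto g x_0$ is a quasi-isometry from $(G, d_S)$ to $X$. The main subtlety of the argument is the finiteness of $S$: it requires invoking \emph{both} senses of ``proper'' in tandem --- properness of $X$ makes the relevant closed ball compact, while properness of the action upgrades this to a finite set of group elements. Once finiteness is in place, the rest is straightforward triangle-inequality bookkeeping.
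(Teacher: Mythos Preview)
Your argument is the standard and correct proof of the \v{S}varc--Milnor lemma. Note, however, that the paper does not supply its own proof of this statement: it is quoted verbatim as Theorem~25 of de~la~Harpe's book and invoked as a black box, so there is no in-paper argument to compare against. Your write-up is essentially the proof one finds in that reference.
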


We will first show that $G \backslash \mathcal{MS}(C)$ is finite.

\begin{Lem}
\label{fundamental}
There are a finite number of  minimal genus Seifert surfaces in the core, called \emph{fundamental surfaces}, such that every Seifert surface for $K$ in the core is either fundamental or can be obtained by spinning a fundamental surface around some number of JSJ tori in the interior of the core.   In other words, there are a finite number of isotopy classes of minimal genus Seifert surfaces $\sigma_1, \ldots, \sigma_j$ in the core such that any other isotopy class of  minimal genus Seifert surfaces $\sigma' \in \mathcal{MS}(C)$ can be written as $\sigma' = g \sigma_k$ for some $g \in G$ and some fundamental surface $\sigma_k$. 
\end{Lem}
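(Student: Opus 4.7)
My plan is to combine the finiteness results for intersections with blocks (Theorem~\ref{fin surfaces}) with a finiteness result for the number of intersection curves (Lemma~\ref{bound}), and then argue that any remaining ambiguity in reconstructing $S$ from its pieces amounts to spinning around the JSJ tori in the interior of the core.

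First, for each core block $B_j$, Theorem~\ref{fin surfaces} gives a finite list $\mathcal{F}_j$ of isotopy classes of essential surfaces containing every component of $S\cap B_j$ for every minimal genus Seifert surface $S$. By Lemma~\ref{bound}, the total number of intersection curves $|S\cap T|$ is bounded by $6g-4$, so each minimal genus Seifert surface $S$ in the core determines a bounded combinatorial datum: for each core block $B_j$, an unordered tuple (of length at most $6g-4$) of elements of $\mathcal{F}_j$, together with a matching of boundary circles across each interior JSJ torus $T_i$. The set of such combinatorial data is finite.

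Next, I would show that any two minimal genus Seifert surfaces $S, S'$ that realize the same combinatorial datum differ by an element of $G$. For each interior JSJ torus $T_i$, Proposition~\ref{fixedslope} forces both $S\cap T_i$ and $S'\cap T_i$ to be parallel copies of the same slope $\alpha_i$, so after isotopy their intersections with $T_i$ coincide as subsets of $T_i$. Inside the product neighborhood $\mathcal{N}(T_i)\cong I\times T_i$, $S$ and $S'$ each consist of a collection of annuli joining matching boundary circles from the two adjacent blocks; such annuli are classified up to isotopy rel boundary by an integer counting how many times they twist around $T_i$ in the direction orthogonal to $\alpha_i$. Any difference in these twisting integers can be undone by applying the appropriate power of $\Phi_i$, and these spinnings on different tori commute and have disjoint support, so the composite element of $G$ carries $S$ to $S'$ up to isotopy.

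Finally, choosing one representative from each equivalence class of combinatorial data yields a finite collection $\sigma_1,\dots,\sigma_j\in\mathcal{MS}(C)$ such that every $\sigma'\in\mathcal{MS}(C)$ is of the form $g\sigma_k$ for some $g\in G$. The main obstacle I anticipate is the second step: verifying that the gluing ambiguity inside each $\mathcal{N}(T_i)$ really is parametrized by a single integer (the transverse twisting) and that this integer is precisely what $\Phi_i$ changes. One must use that the slope of $S\cap T_i$ is fixed and that the twisting direction corresponding to $\alpha_i$ acts trivially on isotopy classes, so only the orthogonal direction — the one generating $G$ — produces genuinely different surfaces.
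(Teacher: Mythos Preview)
Your proposal is correct and follows essentially the same approach as the paper: decompose a minimal genus Seifert surface into its block pieces via Theorem~\ref{fin surfaces}, note that these pieces come from a finite list, and observe that the only remaining ambiguity in reassembling $S$ is the twisting of the connecting annuli in each $\mathcal{N}(T_i)$, which is exactly what the $\Phi_i$ act on. Your write-up is in fact more careful than the paper's: you explicitly invoke Lemma~\ref{bound} to bound the number of components (so that the combinatorial datum is genuinely finite) and Proposition~\ref{fixedslope} to pin down the slope on each $T_i$ (so that the gluing ambiguity is a single integer in the direction of $\Phi_i$), whereas the paper's proof leaves both of these points implicit.
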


\begin{proof} 
Let $S$ be a minimal genus Seifert surface for the knot $K$ and assume that $S$ meets the JSJ tori $T_1, \ldots, T_n$ transversally and minimally.  Let $\mathcal N(T_i)$ be a neighborhood of $T_i$.  Recall that the blocks $B_k$ are the components of the complement of the JSJ tori.

For each $k$, each component of $S \cap B_k$ is isotopic to one of the finitely many possible properly embedded surfaces as given in Theorem \ref{fin surfaces}.  Inside each neighborhood  $\mathcal N(T_i)$, each component of $S \cap \mathcal N(T_i)$ is an incompressible annulus.

Therefore, the Seifert surface $S$ is obtained from some finite collection of the incompressible surfaces in each block by connecting these pieces with annuli across $T$.  Up to spinning around the torus, there are finitely many ways to connect the incompressible surfaces on either side of each torus.  Therefore, every Seifert surface $S$ is in the orbit of one of finitely many isotopy classes of Seifert surfaces coming from the finite number of ways of putting together the finite components in each block. 
\end{proof}

Next, we will show that the action is proper. 

\begin{Lem}
\label{proper}
The action of the group $G$ on $\mathcal{MS}(C)$ is proper.  That is, the stabilizer $G_v=\{ g \in G \ | \ g(v)=v \}$ is finite for every vertex $v$ in  $\mathcal{MS}(C)$.  
\end{Lem}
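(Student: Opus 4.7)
The plan is to show that any $g \in G_v$ is determined, up to finitely many choices, by the combinatorial permutation it induces on the intersection curves $S\cap T$, where $T=\bigcup_i T_i$ and $S$ is a fixed Seifert surface representing $v$; since this combinatorial data lies in a finite set, $G_v$ will be finite.

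First I would fix a minimal-genus Seifert surface $S$ representing $v$, isotoped to meet $T$ minimally. For an arbitrary $g \in G_v$ choose a lift $g' = \prod_i \Phi_i^{n_i} \in G'$, where $i$ ranges over the JSJ tori in the interior of the core. Because the support $\mathcal N(T_i)$ of each $\Phi_i$ is disjoint from the interior of every block $B_k$, the map $g'$ restricts to the identity on every block, so $g'(S)$ and $S$ agree as subsets of each $B_k$. They differ only inside the collars $\mathcal N(T_i)$, where $g'(S)$ is obtained from $S$ by $n_i$ full twists in the direction transverse to the fixed intersection slope $\alpha_i$ from Proposition~\ref{fixedslope}.

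Next, the hypothesis $g(v)=v$ provides an ambient isotopy of $M_K$ taking $g'(S)$ to $S$. After a further ambient isotopy, using that the collection $T$ is canonical up to isotopy, this isotopy can be arranged to preserve $T$ setwise and hence induces a self-homeomorphism $\psi$ of $S$ permuting the components of $S\cap T$. By Lemma~\ref{bound} there are at most $6g-4$ such components, so $\psi$ realizes only one of finitely many combinatorial permutations $\pi$ of the intersection pattern, and each element of $G_v$ maps to one of these.

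The crux of the argument is then to bound, for each fixed $\pi$, the number of twist tuples $(n_i)$ that can produce a surface isotopic to $S$ realizing $\pi$. I expect this to be the main obstacle and would address it by passing to the infinite cyclic cover $\widetilde M_K$ used by Kakimizu to define his metric: nonzero spinning by $\Phi_i^{n_i}$ translates the lifts of the annular components of $S\cap \mathcal N(T_i)$ in a direction transverse to the deck translation, and once $\pi$ is fixed this translation cannot be absorbed by any ambient isotopy of $M_K$ preserving $T$ beyond a bounded combinatorial indeterminacy; the residual ambiguity corresponds to elements of the kernel $N$ that has already been quotiented out in $G=G'/N$. Combining the finiteness of the set of possible $\pi$ with the finite number of twist tuples for each $\pi$, the map $G_v \to \{\text{permutations }\pi\}$ has finite fibers and $G_v$ is finite.
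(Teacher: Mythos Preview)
Your approach is genuinely different from the paper's, and the gap you yourself flag at the ``crux'' step is real and not closed by the sketch you give.

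The paper's argument is more abstract and sidesteps the geometric analysis you attempt. It uses Lemma~\ref{fundamental} (finitely many $G$-orbits in $\mathcal{MS}(C)$) together with Claim~\ref{locallyfinite} (local finiteness) as follows: pick one representative $v_i$ from each orbit, take the ball $B$ in $\mathcal{MS}(C)$ centered at $v$ large enough to contain all the $v_i$, and map $G_v$ into the finite permutation group of the vertices of $B$. Injectivity comes from the abelian-ness of $G$: if $g\in G_v$ fixes every $v_i$, then for any $x = h(v_i)$ one has $g(x)=gh(v_i)=hg(v_i)=h(v_i)=x$, so $g$ acts trivially on all of $\mathcal{MS}(C)$, hence lies in $N$ and is the identity in $G=G'/N$. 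No cyclic-cover bookkeeping is needed.

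Your argument, by contrast, tries to read off the twist vector $(n_i)$ from a permutation $\pi$ of the curves $S\cap T$. The problem is that this permutation carries very little information about the $n_i$: twisting by $\Phi_i^{n_i}$ does not permute the curves $S\cap T$ at all (it is the identity on $\partial\mathcal N(T_i)$), and the permutation you obtain comes only from the ambient isotopy realizing $g'(S)\simeq S$, which is highly non-unique. So the map $G_v \to \{\pi\}$ is well-defined only up to this ambiguity, and showing its fibers are finite is essentially the whole content of the lemma. Your cyclic-cover sketch gestures at the right invariant (relative heights of lifts of the block pieces of $S$), but you would still need to prove that an ambient isotopy preserving $T$ cannot change those heights beyond a bounded amount, and that the residual ambiguity lies exactly in $N$; neither of these is established. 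In particular, when several tori are present, an isotopy of the pieces in one block can shuffle all of its boundary curves simultaneously, so the interactions between different $n_i$ are not as independent as your sketch suggests.

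In short: the paper trades your hard geometric step for two softer facts already in hand (finite orbits and local finiteness) plus one line of algebra exploiting that $G$ is abelian. If you want to salvage your route, you would need to replace the permutation invariant by something like the Kakimizu distance $d(S,\Phi_i^{n_i}S)$ in the cyclic cover and show it grows with $|n_i|$; that is essentially a second proof of properness, not a shortcut.
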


\begin{proof}  Let $v \in \mathcal{MS}(C)$ be a vertex of the core Kakimizu complex.   To prove that $G_v$ is finite, we will show that there is a monomorphism from $G_v$ to a finite group.

By Lemma~\ref{fundamental}, there are finitely many orbits $O_1,O_2, \ldots, O_r$ of the group $G$.   Choose a representative of each orbit $v_i \in O_i$. Let $V=\{v, v_1,...,v_r\}$, and let $d$ be the diameter of $V$. Let $B$ be a ball of diameter $d$ centered at the vertex $v$.  By construction, $V \subset B$.  Notice that each automorphism of $G_v$ preserves distance between vertices, so the ball $B$ is fixed set-wise.   Since $\mathcal{MS}(C)$ is locally finite by Claim~\ref{locallyfinite}, there are finitely many vertices in $B$.  This induces a homomorphism from the stabilizer $G_v$ to the permutation group of the (finitely many) vertices of $B$. 

To see that this homomorphism is injective, we note that the kernel consists of all elements of $G_v$ that fix $B$ point-wise.  Let $g$ be such an automorphism in the kernel.  Since $g$ fixes $B$ pointwise, then $g(v_i) = v_i$ for all $i$.  For any $x \in \mathcal{MS}(C)$,  $x = h(v_i)$ for some $i$, where $h$ is some element of $G$.  Since $G$ is abelian, $gh = hg$, so $g(x) = g(h(v_i)) = h(g(v_i)) = h(v_i) = x$.  Thus, $g$ fixes every $x \in \mathcal{MS}(C)$.  Because we quotiented out by the elements of $G'$ that act trivially, $g$ is the identity element in $G$ and in $G_v$.  Thus, the homomorphism from $G_v$ to the finite permutation group is injective, and thus $G_v$ is finite.  
\end{proof}

We can now combine these results to prove the following:

\begin{Lem} 
\label{locallyfiniteqi}
$\mathcal{MS}(C)$ is quasi-isometric to a a finitely generated abelian group.
\end{Lem}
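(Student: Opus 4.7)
The plan is to apply Theorem~\ref{de La Harpe} with $X = \mathcal{MS}(C)$ (realized as a length space by giving each edge length one) and the group $G$ acting as defined above. The conclusion that $G$ is finitely generated and quasi-isometric to $X$ is then exactly what is claimed in the lemma, since quasi-isometry is a symmetric relation.

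To apply de la Harpe's theorem, I must verify each of its hypotheses. First, $\mathcal{MS}(C)$ is a \emph{proper geodesic metric space}: it is connected by Claim~\ref{connected}, and locally finite by Claim~\ref{locallyfinite}, so the closed $r$-ball about any vertex is a finite subcomplex and hence compact. Because it is a connected, locally finite simplicial complex with the standard length metric on each edge, any two points are joined by an edge path realizing their distance, which we parametrize as a geodesic. Second, $G$ acts on $\mathcal{MS}(C)$ \emph{by isometries}: each generator $\Phi_i$ is a homeomorphism of the knot complement supported in the neighborhood of a single JSJ torus $T_i$, and each $\Phi_i$ carries each block to itself, so $\Phi_i$ sends any Seifert surface lying in $C$ to a Seifert surface lying in $C$. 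The induced map on $\mathcal{MS}(C)$ is simplicial, hence preserves the edge-path metric. Third, the action is \emph{proper} by Lemma~\ref{proper}, which gives finiteness of every vertex stabilizer.

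The remaining hypothesis is that the quotient $G \backslash \mathcal{MS}(C)$ is compact. This is precisely where Lemma~\ref{fundamental} enters: it furnishes finitely many fundamental Seifert surfaces $\sigma_1,\dots,\sigma_j$ whose $G$-orbits cover every vertex of $\mathcal{MS}(C)$. Thus the quotient has only finitely many vertices; since the quotient of a locally finite complex by a simplicial action is again a simplicial complex of the same dimension, and since $\mathcal{MS}(C)$ is finite-dimensional (bounded, for instance, by Lemma~\ref{bound}), the quotient is a finite simplicial complex and hence compact.

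With all four hypotheses in hand, Theorem~\ref{de La Harpe} produces a constant $L$ and, for any chosen basepoint $v_0 \in \mathcal{MS}(C)$, a quasi-isometry $G \to \mathcal{MS}(C)$ sending $g \mapsto g\cdot v_0$. Since $G = G'/N$ was already identified as a finitely generated abelian group in Section~\ref{gractionsect}, the inverse quasi-isometry gives the desired conclusion. The only step requiring any substantive argument is the compactness of the quotient, which ultimately depends on the finiteness statements packaged in Theorem~\ref{fin surfaces}; the rest is a straightforward unpacking of the properties already established for $\mathcal{MS}(C)$ and for the action of $G$.
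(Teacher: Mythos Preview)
Your proof is correct and follows essentially the same route as the paper's: verify the hypotheses of Theorem~\ref{de La Harpe} using Claims~\ref{connected} and~\ref{locallyfinite} for the proper geodesic structure, Lemma~\ref{proper} for properness of the action, and Lemma~\ref{fundamental} for compactness of the quotient. Your write-up is somewhat more explicit than the paper's in checking that $\mathcal{MS}(C)$ is a proper metric space and that the quotient is compact, but the logical structure is identical.
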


\begin{proof}
As noted above, the metric on $\mathcal{MS}(C)$  is  the path metric, so the complex is properly geodesic.  Since each automorphism of $G$ takes disjoint surfaces to disjoint surfaces, it  preserves distances between vertices and thus acts isometrically on $\mathcal{MS}(C)$.  By Lemma~\ref{fundamental}, $G \backslash \mathcal{MS}(C)$ is finite and hence compact, and by Theorem~\ref{proper}, the action of $G$ on $\mathcal{MS}(C)$ via left multiplication is proper.  Thus Lemma~\ref{de La Harpe} implies that $\mathcal{MS}(C)$ is quasi-isometric to $G$, a finitely generated abelian group.
\end{proof}

\section{Proof of the Main Theorem}
\label{mainproofsect}

In the previous section, we proved that the core Kakimizu complex $\mathcal{MS}(C)$ is quasi-isometric to a finitely generated abelian group.  To prove our main result, it remains to show that for any knot $K$, $\mathcal{MS}(K)$ is quasi-isometric to $\mathcal{MS}(C)$.

\begin{Lem} 
\label{corelem}
The core Kakimizu complex $\mathcal{MS}(C)$ is quasi-isometric to the entire Kakimizu complex $\mathcal{MS}(K)$.  
\end{Lem}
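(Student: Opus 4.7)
My plan is to construct a coarse retraction $r : \mathcal{MS}(K) \to \mathcal{MS}(C)$ that has bounded displacement in $d_K$ and is coarsely Lipschitz into $\mathcal{MS}(C)$. Together with the $1$-Lipschitz inclusion $\iota : \mathcal{MS}(C) \hookrightarrow \mathcal{MS}(K)$, such an $r$ is a quasi-inverse and witnesses the desired quasi-isometry. I will define $r$ by iterating the surgery procedure from Claim~\ref{nonempty}: given a vertex of $\mathcal{MS}(K)$, pick a representative $S$ in minimal position with respect to $\bigcup T_i$; if $S$ exits the core, it meets some JSJ torus $T$ on the boundary of the core, and because the knot is null-homologous on the non-core side the curves $S \cap T$ occur in pairs of opposite orientation. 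Surger such an adjacent pair by capping with an annulus on $T$ between them and pushing slightly into the core, producing a new surface disjoint from $S$ with strictly fewer intersections with the non-core boundary tori. Iterate until the surface lies entirely in $C$ and take this as $r(S)$.

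By Lemma~\ref{bound} the initial count $|S \cap \bigcup T_i|$ is at most $6g-4$, and each surgery removes two intersections, so at most $3g-2$ iterations suffice. Thus $d_K(S, r(S)) \le 3g-2$ for every $S$, and in particular $\iota(\mathcal{MS}(C))$ is $(3g-2)$-dense in $\mathcal{MS}(K)$. Moreover, if $S$ already represents a vertex of $\mathcal{MS}(C)$, then by incompressibility its minimum-position representative has zero intersection with every non-core boundary torus, so the procedure is trivial and $r(S) = S$ in $\mathcal{MS}(C)$.

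The technical heart of the proof is showing that $r$ is coarsely Lipschitz: for $S_1, S_2 \in \mathcal{MS}(K)$ with disjoint representatives, I need $d_C(r(S_1), r(S_2)) \le L$ for a universal constant $L$. I would put $S_1 \cup S_2$ into simultaneous minimal position with $\bigcup T_i$ so that on each non-core boundary torus $T$ the curves $S_1 \cap T$ and $S_2 \cap T$ form disjoint families of parallel null-homologous loops with alternating orientation signs (forced by orientability of each Seifert surface). The goal is then to choose opposite-oriented pairings within each family whose surgery annuli on $T$ avoid the other family, so that the resulting $r(S_1), r(S_2)$ remain disjoint when pushed into $C$. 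Iterating this compatible choice along any $\mathcal{MS}(K)$-geodesic between two vertices $v, w \in \mathcal{MS}(C)$ then produces a path of the same length in $\mathcal{MS}(C)$, establishing $\iota$ as an isometric embedding and completing the quasi-isometry.

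The main obstacle is the combinatorial pairing claim. The cyclic arrangement of signed curves on each $T$, partitioned into two disjoint alternating subfamilies, must admit compatible pairings whose connecting annuli avoid the other subfamily. In generic configurations a parenthesization-type matching should succeed, but pathological interleavings may require a bounded number of extra intermediate surgeries; either way, a uniform bound on $d_C(r(S_1), r(S_2))$ yields a coarse-Lipschitz retraction $r$, which is all that is needed for the quasi-isometry.
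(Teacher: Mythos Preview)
Your bounded-displacement argument is fine and matches the paper's: the surgery from Claim~\ref{nonempty} together with Lemma~\ref{bound} shows every vertex of $\mathcal{MS}(K)$ lies within $3g-2$ of $\mathcal{MS}(C)$. The gap is exactly where you flag it: the compatible-pairing claim is not proved. You need that on each non-core boundary torus $T$, the two disjoint alternating families $S_1\cap T$ and $S_2\cap T$ admit innermost-annulus matchings whose surgery annuli are pairwise disjoint, and you only sketch a hope that ``a parenthesization-type matching should succeed.'' Without this, $r$ is not known to be coarsely Lipschitz, and the argument does not close. (Even if a bounded number of extra surgeries repairs bad interleavings, you would still need a uniform bound independent of the configuration, which you have not supplied.)

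The paper avoids this difficulty entirely. Rather than projecting an arbitrary $\mathcal{MS}(K)$-geodesic into $\mathcal{MS}(C)$, it invokes the result of Schultens (building on Scharlemann--Thompson) that geodesics in $\mathcal{MS}(K)$ can be realized by iterated double curve sums. Since the double curve sum of two surfaces contained in $C$ is again contained in $C$, any such geodesic between vertices of $\mathcal{MS}(C)$ already lies in $\mathcal{MS}(C)$. This immediately gives that the inclusion $\mathcal{MS}(C)\hookrightarrow\mathcal{MS}(K)$ is an isometric embedding, with no combinatorial matching needed. Combined with the $(3g-2)$-density you already have, the quasi-isometry follows.
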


\begin{proof}
We will show that $\mathcal{MS}(C)$ is quasi-isometric to $\mathcal{MS}(K)$ by showing that the embedding preserves distances and every vertex $\sigma$ in $\mathcal{MS}(K)$ is within a bounded distance from some vertex $\sigma'$ in $\mathcal{MS}(C)$ and the inclusion map $\mathcal{MS}(C) \hookrightarrow \mathcal{MS}(K)$ is quasi-isometric.  First we note that the proof of Lemma 3 of \cite{Sch} uses double curve sums to produce geodesics in $\mathcal{MS}(K)$ (as opposed to just paths). Since a double curve sum in $C$ produces a new surface in $C$, this implies that the geodesics between vertices of $\mathcal{MS}(C)$ constructed in this way will be contained in $\mathcal{MS}(C)$. Thus given two vertices in the core of the Kakimizu complex $\mathcal{MS}(C)$, measuring their distance in $\mathcal{MS}(C)$ is equivalent to measuring their distance in the entire Kakimizu complex $\mathcal{MS}(K)$. Thus the inclusion map preserves distances.

Let $S$ be a minimal genus Seifert surface for $K$ in the isotopy class $\sigma$.   If $S$ is contained in the core, then $\sigma \in \mathcal{MS}(C)$.  If not, then, $S$ must intersect a JSJ torus $T$ that bounds a block $B$ inside the core and $B'$ outside the core.  The torus $T$ separates the Seifert surface $S$ into a compact surface $S'$ inside the core and finitely many annuli $A_i$ outside the core.  In fact, there are at most $3g-2$ annuli $A_i$ since any minimal genus Seifert surface intersects the JSJ tori in at most $6g-4$ circles by Lemma \ref{bound}.  The boundaries $C_i^+ \cup C_i^-$ of each annulus $A_i$ can be rejoined by annuli $D_i$ that lie inside of $B$.  Attaching these $D_i$ to $S'$ yields a minimal genus Seifert surface $\widetilde{S}$ that lies completely in the core $C$ and is represented by an isotopy class $\sigma' \in \mathcal{MS}(C)$. 

Each time we surger the surface at a single annulus in this way, the new surface is disjoint from the previous surface. Thus $\sigma$ and $\sigma'$ are connected by a path in $\mathcal{MS}(K)$ of distance at most the number of annuli outside $C$, which is at most $3g - 2$. Thus, every $\sigma \in \mathcal{MS}(K)$ is within a bounded distance from $\mathcal{MS}(C)$ and thus the two complexes are quasi-isometric.
\end{proof}

\begin{proof}[Proof of Theorem~\ref{mainthm}]
In Lemma~\ref{corelem}, we showed that $\mathcal{MS}(K)$ is quasi-isometric to $\mathcal{MS}(C)$.  
Because quasi-isometry defines an equivalence relation, this implies, by Lemma~\ref{locallyfiniteqi}, that $\mathcal{MS}(K)$ is quasi-isometric to a finitely generated abelian group, and therefore quasi-isometric to  $\mathbb{Z}^n$ for some $n$.

Since $\mathcal{MS}(K)$ is quasi-isometric to $\mathcal{MS}(C)$, infinite diameter families of isotopy classes of Seifert surfaces can only be generated by spinning around tori in the interior of the core. For each block $B$ that is homeomorphic to the complement of a fibered link, spinning around the torus closest to the knot $K$ is equivalent (up to isotopy) to spinning around the remaining boundary tori. Thus $n$ is at most the number of JSJ tori in the interior of the core minus the number of fibered core blocks. Conversely, by Corollary 4.5 of~\cite{Ba2}, spinning around a torus that does not cobound a fibered block always produces a new Seifert surface. Thus, the value of the rank $n$ in $\mathbb Z^n$ is exactly the number of JSJ tori in the interior of the core minus the number of fibered blocks in the core.  
\end{proof}

\bibliographystyle{amsplain}
\bibliography{kakimizu}
 
\end{document}